\newtheorem{theorem}{Theorem}
\theoremstyle{plain}
\newtheorem{corollary}{Corollary}
\newtheorem{example}{Example}
\newtheorem{problem}{Problem}
\newtheorem{proposition}{Proposition}
\newtheorem{remark}{Remark}
\numberwithin{equation}{section}
\begin{document}
\title[The twist angle of weighted Steiner minimal trees]{The twist angle of weighted Steiner minimal trees in the three dimensional Euclidean Space}
\author{Anastasios N. Zachos}

\address{Greek Ministry of Education, Greece}
\email{azachos@gmail.com} \keywords{
Steiner problem, weightedSteiner minimal tree, tetrahedra,
inverse Steiner tree problem} \subjclass{52B10, 51E10, 49M05}
\begin{abstract}
We find the equations that allow us to compute the position of the two interior nodes (weighted Fermat-Torricelli points) w.r. to the  weighted Steiner problem for four points determining a tetrahedron in $\mathbb{R}^{3}.$ Furthermore, by applying the solution w.r. to the weighted Steiner problem for a boundary tetrahedron, we calculate the twist angle between the two weighted Steiner planes formed by one edge and the line defined by the two weighted Fermat-Torricelli points and a non-neighbouring edge and the line defined by the two weighted Fermat-Torricelli points.
\end{abstract}\maketitle

\section{Introduction}

We state the weighted Steiner problem for $n$ points in $\mathbb{R}^{3}.$
The Steiner problem for equal weights $\mathbb{R}^{2}$ has been introduced in (\cite[pp.360]{Cour/Rob:51}).

\begin{problem}
Given n points $A_{1},\cdots A_{n},$ in $\mathbb{R}^{3},$ such that a positive real number (weights) corresponds to each point $A_{i},$  to find a connected system of
straight line segments of shortest weighted total length such that any two
of the given points can be joined by a polygon consisting of
segments of the system.
\end{problem}
A weighted Steiner tree is a solution of the weighted Steiner problem in $\mathbb{R}^{3}.$
For a classical study about the geometry of the weighted Steiner problem and further generalizations on manifolds,  we refer to \cite{IvanovTuzhilin:01}, \cite{IvanovTuzhilin:94}, \cite{Ci} and for equal weights, we refer to \cite{GilbertPollak:68}.

The solution of the Steiner tree problem determines some tree topologies, which have been
characterized by the following theorem:

\begin{theorem}{\cite[pp.~328]{BolMa/So:99},\cite{GilbertPollak:68}}
Any solution of the unweighted Steiner problem is a tree (a
Steiner tree) with at most $n-2$ Fermat-Torricelli points, where
each Fermat-Torricelli point has degree three and the angles
formed between any two edges incident with a Fermat-Torricelli
point are equal ($120^{\circ}$). The $n-2$ Fermat-Torricelli
points are vertices of the polygonal tree which do not belong to
$\{A_{1},\cdots,A_{n}\}.$
\end{theorem}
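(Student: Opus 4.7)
The plan is to establish the three assertions of the theorem---that any minimizer is a tree, that each added (Fermat-Torricelli) vertex has degree three with $120^{\circ}$ angles, and that the number of such added vertices is at most $n-2$---in three stages, proceeding from global topology to local geometry and finally to a counting argument.

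First, I would fix a minimizer, whose existence follows by a standard compactness argument (any optimal network lies in the convex hull of $\{A_1,\dots,A_n\}$, and length is lower semicontinuous on sequences of connected networks with bounded total length). To show this minimizer is a tree, I would argue by contradiction: if it contained a cycle, deleting a single edge of the cycle would preserve connectivity between every pair of $A_i$ while strictly decreasing the total length, contradicting minimality.

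Second, and this is the main step, I would perform the local analysis at each auxiliary vertex $P$ (a node of the network which is not one of the $A_i$). Using the first variation of length, displacing $P$ infinitesimally in a direction $v$ changes the total length to first order by $-v\cdot\sum_{e\ni P}\hat{u}_e$, where $\hat{u}_e$ is the unit vector along the edge $e$ pointing away from $P$. Stationarity forces $\sum_{e\ni P}\hat{u}_e=0$. From this balance I would deduce: (i) if $\deg(P)=2$, then $P$ lies on the straight segment joining its two neighbours and is not truly a branching vertex, so it can be suppressed; (ii) if $\deg(P)=3$, the three unit vectors summing to zero are equivalent to the pairwise $120^\circ$ condition; (iii) if $\deg(P)\ge 4$, among any four or more coplanar-or-not unit vectors with vanishing sum some pair subtends an angle strictly less than $120^\circ$, and the standard Steiner/Melzak splitting---replacing that pair by a short new edge to a fresh auxiliary vertex of degree three---strictly decreases length, so such configurations are never optimal.

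Third, once every interior node is known to have degree exactly three, let $k$ be the number of such Fermat-Torricelli points. The minimizer is a tree on $n+k$ vertices, hence has $n+k-1$ edges, and the sum of degrees equals $2(n+k-1)$. Each original point $A_i$ contributes at least $1$ to this sum and each interior node contributes exactly $3$, giving
\[
n+3k\;\le\;2(n+k-1),
\]
which simplifies to $k\le n-2$, as claimed.

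The hard part will be step two: carrying out the first-variation analysis cleanly for every possible degree at an auxiliary vertex, and in particular exhibiting the explicit length-decreasing perturbation that rules out $\deg(P)\ge 4$. The existence/compactness part and the handshake count in step three are routine once the local structure at interior nodes has been pinned down.
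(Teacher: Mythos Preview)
Your sketch is correct and follows the classical line of argument that appears in the very references the paper cites (Gilbert--Pollak and Boltyanski--Martini--Soltan): cycle deletion for the tree property, first variation plus the local $120^{\circ}$ splitting lemma to force degree three at every added node, and the handshake identity for the bound $k\le n-2$. One small point worth tightening is the treatment of $\deg(P)\ge 4$: the cleanest way to phrase it is the inequality $\bigl|\sum_{i}\hat u_i\bigr|^{2}=k+2\sum_{i<j}\hat u_i\cdot\hat u_j\le k-\binom{k}{2}$ whenever all pairwise angles are at least $120^{\circ}$, which is negative for $k\ge 4$; this immediately produces a pair at angle $<120^{\circ}$ and feeds into the shortening perturbation you describe.

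There is, however, nothing to compare against in the paper itself. The theorem is stated with attributions and is not proved in the paper; it is invoked as background for the author's own results on weighted Steiner trees for tetrahedra. So your proposal supplies a proof where the paper deliberately supplies none, and the route you take is exactly the standard one from the cited sources.
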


In 2002, Rubinstein, Thomas and Weng (\cite{RubinsteinThomasWeng:02}) succeeded in calculating the two interior Fermat-Torricelli points (Steiner nodes) for a boundary tetrahedron in $\mathbb{R}^{3},$ by using two Cartesian Coordinate systems w.r to the common perpendicular of two non-neighbouring edges of the tetrahedron, in order to determine the position of line which passes through the two nodes (Simpson line). Furthermore, they provide the necessary and sufficient conditions for the existence of the two non-degenarate weighted Fermat-Torricelli points of w.r. to a boundary tetrahedron.
In 2006, Mese and Yamada (\cite{MeseYamada:06})gave a beautiful connection between the Steiner problem and the Singular Plateau problem via the energy functional. The weighted Steiner problem may also be called as a one dimensional Plateau problem and the corresponding solutions may be called branching solutions (\cite{IvanovTuzhilin:01}).
In 2007, J.M. Smith, Jang and Kim apply the connection of the minimum energy configuration with unweighted Steiner trees and study the geometric folding problem of proteins by measuring twist angles and considering various energy configurations. The protein folding problem is to take a linear string of aminoacids and examine the geometric properties and how it folds in $\mathbb{R}^{3}.$ The twist angle is the angle formed by two planes composed by a node (Fermat-Torricelli point) and an edge (Steiner planes). According to our knowledge, there is not any known formula to compute the twist angle of the weighted (or unweighted) Steiner problem for boundary tetrahedra in $\mathbb{R}^{3}.$

By replacing zero weights in $n-3$ vertices of Problem~1, we derive the classical weighted Fermat-Torricelli problem for a triangle in $\mathbb{R}^{2},$ which has been introduced and first solved by Gueron and Tessler in \cite{Gue/Tes:02}.

In this paper, we study the geometry of the weighted Steiner problem for four points determining a tetrahedron in $\mathbb{R}^{3}.$

In Section~2, we provide  the necessary and sufficient conditions for the existence of two weighted Fermat-Torricelli points (nodes) at the interior of the tetrahedron in $\mathbb{R}^{3}$ (Theorem~2).

Furthermore, we find the equations that allow us to compute the position of the weighted Simpson line w.r to the weighted Steiner problem for four points determining tetrahedra in $\mathbb{R}^{3}$ (Theorem~4) and we find the equations that allow us to compute the position of the unweighted Fermat-Torricelli point w.r to the unweighted Fermat-Torricelli problem for a boundary tetrahedron in $\mathbb{R}^{3}$ (Theorem~5).
It is worth mentioning that the equations taken from Theorem~5 which deal with the computation of the position of the unweighted Fermat-Torricelli problem for a boundary tetrahedron are more complicated than the equations taken from Proposition~1 to determine the position for the unweighted Simpson line w.r to the unweighted Steiner problem for the same boundary tetrahedron.

In Section~3, we derive a formula to compute the twist angle of the weighted (or unweighted) Steiner planes for
a boundary tetrahedron $A_{1}A_{2}A_{3}A_{3}$ (Theorem~6). The twist angle is the dihedral angle which is formed by an edge $A_{1}A_{2}$ and the position of the weighted Simpson line and the edge $A_{4}A_{4}$ and the position of the weighted Simpson line. The weighted Simpson line is the line defined by the two weighted Fermat-Torricelli points.


\section{The weighted Steiner problem for four points determining tetrahedra in the three-dimensional Euclidean Space.}

\begin{figure}
\centering
\includegraphics[scale=0.90]{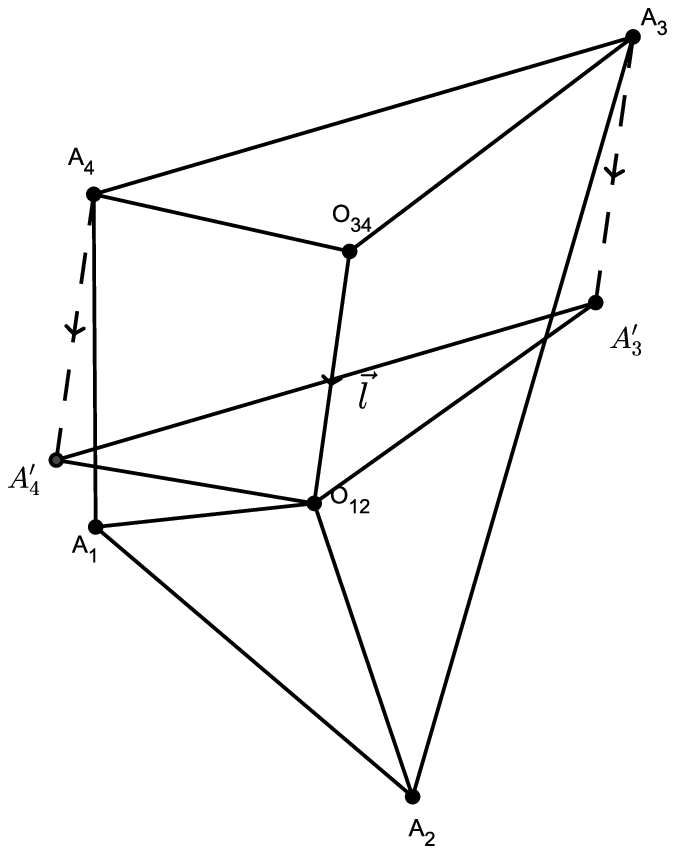}
\caption{} \label{figg1}
\end{figure}

\begin{figure}
\centering
\includegraphics[scale=0.90]{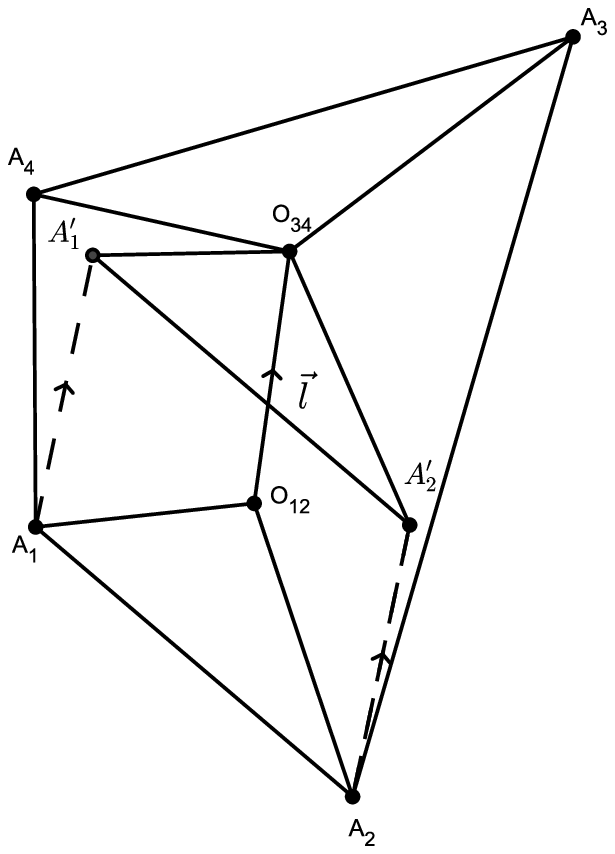}
\caption{} \label{figg2}
\end{figure}

We denote by $A_{1}A_{2}A_{3}A_{4}$ a tetrahedron in $\mathbb{R}^{3},$ with
$A_{i}(x_{i},y_{i},z_{i})$ ($i=1,2,3,4$), by $B_{i}$ a positive real number(weight) which corresponds to the vertex $A_{i},$ $O_{12},$ $O_{34}$ two interior points (nodes) of $A_{1}A_{2}A_{3}A_{4}$ in $\mathbb{R}^{3},$
by $B_{12}$ the weight which corresponds to $O_{12},$ $B_{34}$ the weight which corresponds to $O_{34},$ by $H$ the length of the common perpendicular (Euclidean distance) between the two lines defined by $A_{1}A_{2},$ $A_{4}A_{3},$
by $A_{i}A_{j}$ the Euclidean distance from $A_{i}$ to $A_{j},$ by $O_{12}O_{34}$ the Euclidean distance from $O_{12}$ to $O_{34},$ by $A_{i}O_{12}$ the Euclidean distance from $A_{i}$ to $O_{12}$ and by $A_{j}O_{34}$ the Euclidean distance from $A_{j}$ to $O_{34},$ for $i,j=1,2,3,4.$

Furthermore, we set:
$\vec{l}\equiv \overrightarrow{O_{12}O_{34}},$
$\vec{a}_{1}\equiv
\overrightarrow{A_{1}O_{12}},$
$\vec{a}_{2}\equiv\overrightarrow{A_{2}O_{12}},$
$\vec{a}_{3}\equiv\overrightarrow{A_{3}O_{34}},$
$\vec{a}_{4}\equiv\overrightarrow{A_{4}O_{34}}$ and
$\vec{a}_{ij}\equiv \overrightarrow{A_{i}A_{j}},$ for
$i,j=1,2,3,4,$ $i\ne j\ne k,$ $\alpha_{12}\equiv \angle A_{1}O_{12}A_{2},$
$\alpha_{34}\equiv \angle A_{3}O_{34}A_{4},$ $\alpha_{1}\equiv
\angle A_{2}O_{12}O_{34},$ $\alpha_{2}\equiv \angle
A_{1}O_{12}O_{34},$  $\alpha_{3}\equiv \angle A_{4}O_{34}O_{12},$
$\alpha_{4}\equiv \angle A_{3}O_{34}O_{12}$ and
$\varphi\equiv \arccos(\frac{\vec{a}_{12}\cdot \vec{a}_{43}}{a_{12}a_{43}}).$

We assume that:
$A_{1}A_{4}+A_{2}A_{3}>A_{1}A_{2}+A_{3}A_{4}$(Fig.~\ref{figg1}).

The weighted Steiner problem for $A_{1}A_{2}A_{3}A_{4}$ in $\mathbb{R}^{3}$
states that:

\begin{problem}\label{Steinertetrahedron}
Find $O_{12}(x_{0},y_{0},z_{0})$ and
$O_{34}(x_{0^{\prime}},y_{0^{\prime}},z_{0^{\prime}})$ with given
weights $B_{12}$ in $O_{12}$ and $B_{34}$ in $O_{34},$ such that
\begin{equation}\label{equat1}
f(O_{12},O_{34})=B_{1}A_{1}O_{12}+ B_{2}A_{2}O_{12}+B_{3}A_{3}O_{34}+B_{4}A_{4}O_{34}+\frac{B_{12}+B_{34}}{2}O_{12}O_{34}\to min.
\end{equation}
\end{problem}



We continue by mentioning the necessary and sufficient conditions for the existence of the two non-degenerate weighted Fermat-Torricelli points $O_{12}$ and $O_{34},$ (weighted Steiner points).

\begin{theorem}\label{conditionswst}
The following inequalities provide the necessary and sufficient conditions  for the existence of the two non-degenerate weighted Fermat-Torricelli points $O_{12}$ and $O_{34}:$
\begin{equation}\label{ineq1}
\frac{\sqrt{y(C)^2+z(C)^2}}{x_{1}-x(C)}>\tan(\arccos(\frac{(\frac{B_{12}+B_{34}}{2})^2-B_{1}^2-B_{2}^2)}{2 B_{1}B_{2}}))
\end{equation}
if $x(C)<x_{1},$
\begin{equation}\label{ineq2}
\frac{\sqrt{y(C)^2+z(C)^2}}{x(C)-x_{1}}>\tan(\arccos(\frac{(\frac{B_{12}+B_{34}}{2})^2-B_{1}^2-B_{2}^2)}{2 B_{1}B_{2}}))
\end{equation}
if $x_{1}<x(C),$
\begin{equation}\label{ineq3}
(\sqrt{y(C)^2+z(C)^2}+R_{12}\sin\beta_{12})^{2}+(\frac{x_{1}+x_{2}}{2}-x(C))^2>R_{12}^{2}
\end{equation}

where

$R_{12}\equiv \frac{A_{1}A_{2}}{(B_{1}+B_{2}+\frac{B_{12}+B_{34}}{2})(B_{1}+B_{2}-\frac{B_{12}+B_{34}}{2})(B_{2}+\frac{B_{12}+B_{34}}{2}-B_{1})(B_{1}+\frac{B_{12}+B_{34}}{2}-B_{2})}$

$\beta_{12}=\arccos(\frac{A_{1}A_{2}}{2R_{12}}),$

and

\begin{equation}\label{ineq4}
\frac{\sqrt{y(C)^2+z(C)^2}}{x_{4}-x(C)}>\tan(\arccos(\frac{(\frac{B_{12}+B_{34}}{2})^2-B_{3}^2-B_{4}^2)}{2 B_{3}B_{4}}))
\end{equation}
if $x(C)<x_{4},$
\begin{equation}\label{ineq5}
\frac{\sqrt{y(C)^2+z(C)^2}}{x(C)-x_{4}}>\tan(\arccos(\frac{(\frac{B_{12}+B_{34}}{2})^2-B_{3}^2-B_{4}^2)}{2 B_{3}B_{4}}))
\end{equation}
if $x_(1)<x(C),$
\begin{equation}\label{ineq6}
(\sqrt{y(C)^2+z(C)^2}+R_{34}\sin\beta_{34})^{2}+(\frac{x_{4}+x_{3}}{2}-x(C))^2>R_{34}^{2},
\end{equation}

where

$R_{34}\equiv \frac{A_{4}A_{3}}{(B_{3}+B_{4}+\frac{B_{12}+B_{34}}{2})(B_{3}+B_{4}-\frac{B_{12}+B_{34}}{2})(B_{3}+\frac{B_{12}+B_{34}}{2}-B_{4})(B_{4}+\frac{B_{12}+B_{34}}{2}-B_{3})}$

$\beta_{34}=\arccos(\frac{A_{4}A_{3}}{2R_{34}}).$

\end{theorem}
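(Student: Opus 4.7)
The plan is to decouple the 3D Steiner problem into two 2D weighted Fermat--Torricelli subproblems, apply a Gueron--Tessler-style existence criterion to each, and translate the resulting weighted-angle inequalities into the coordinate form stated in the theorem.

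First I would fix the Rubinstein--Thomas--Weng coordinate system with the edge $A_{1}A_{2}$ along the $x$-axis, so that $A_{1}=(x_{1},0,0)$ and $A_{2}=(x_{2},0,0)$, and identify $C=(x(C),y(C),z(C))$ as the auxiliary point obtained by a weighted Melzak reduction: since the term coupling $O_{12}$ and $O_{34}$ in $f$ has coefficient $(B_{12}+B_{34})/2$, the first-order condition at $O_{12}$ asserts that $O_{12}$ is the weighted Fermat--Torricelli point of a triangle $A_{1}A_{2}C$ with weights $B_{1},B_{2},(B_{12}+B_{34})/2$, where $C$ is the prescribed virtual terminal on the ray from $O_{12}$ through $O_{34}$. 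In particular, the weighted FT angle at $O_{12}$ is
\[
\alpha_{12}=\arccos\!\left(\frac{((B_{12}+B_{34})/2)^{2}-B_{1}^{2}-B_{2}^{2}}{2B_{1}B_{2}}\right),
\]
which is exactly the inner arccosine appearing in (\ref{ineq1}) and (\ref{ineq2}).

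Next I would apply the weighted Gueron--Tessler interiority criterion to the triangle $A_{1}A_{2}C$: the weighted FT point exists in the relative interior if and only if, at each of its three vertices, the triangle angle is strictly smaller than the critical weighted angle determined by the weights. The critical condition at vertex $A_{1}$ rewrites, via $\cos(\angle CA_{1}A_{2})=(x(C)-x_{1})/|A_{1}C|$, into the tangent inequality (\ref{ineq2}) when $x_{1}<x(C)$ and into (\ref{ineq1}) when $x(C)<x_{1}$. The critical condition at the third vertex $C$ translates, via the inscribed angle theorem, into the requirement that $C$ lie strictly outside the Melzak circle through $A_{1},A_{2}$ whose inscribed angle equals $\alpha_{12}$. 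A direct Heron-type computation yields the circumradius $R_{12}$ displayed in the theorem and shows that the centre of that circle lies at $\bigl((x_{1}+x_{2})/2,\,-R_{12}\sin\beta_{12},\,0\bigr)$ with $\beta_{12}=\arccos(A_{1}A_{2}/(2R_{12}))$, so the ``outside the disc'' condition becomes precisely (\ref{ineq3}).

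Repeating the same reduction from the side of $O_{34}$, i.e.\ treating $O_{34}$ as the weighted FT point of $A_{3}A_{4}C$ with weights $B_{3},B_{4},(B_{12}+B_{34})/2$, yields the symmetric inequalities (\ref{ineq4})--(\ref{ineq6}). Necessity is then immediate: if $O_{12},O_{34}$ lie in the open interior of the tetrahedron, each reduced weighted FT point is non-degenerate and the six inequalities follow. For sufficiency I would combine the separate convexity of $f$ in $O_{12}$ and $O_{34}$ with an alternating-minimization argument: under the six inequalities, each restricted minimization returns an interior weighted FT point, and the assumption $A_{1}A_{4}+A_{2}A_{3}>A_{1}A_{2}+A_{3}A_{4}$, together with the common-perpendicular geometry of Fig.~\ref{figg1}, rules out collapse onto an edge in the limit. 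The main obstacle is identifying $C$ unambiguously and verifying that the six inequalities, although stated in terms of the single auxiliary point $C$, genuinely decouple the coupled first-order system for $O_{12}$ and $O_{34}$; this is where the common-perpendicular construction is essential, since projecting along the perpendicular between $A_{1}A_{2}$ and $A_{3}A_{4}$ lets $C$ serve as the Melzak-reduced virtual terminal simultaneously for both subproblems.
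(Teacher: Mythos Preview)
Your approach is essentially the same as the paper's: place $A_{1}A_{2}$ on the $x$-axis, recognise that $O_{12}$ is the weighted Fermat--Torricelli point of the triangle $A_{1}A_{2}C$ with weights $B_{1},B_{2},(B_{12}+B_{34})/2$, convert the vertex-angle non-degeneracy conditions at $A_{1},A_{2}$ into the tangent inequalities (\ref{ineq1})--(\ref{ineq2}), convert the condition at $C$ into the circumcircle inequality (\ref{ineq3}) via the radius $R_{12}$, and then repeat with indices $3,4$. The only cosmetic difference is that the paper phrases the loci directly as three-dimensional half-cones with apex $A_{1}$ (resp.\ $A_{2}$) and as the surface obtained by revolving the weighted arc about $A_{1}A_{2}$, whereas you invoke the planar Gueron--Tessler criterion inside the plane $A_{1}A_{2}C$ and then recover the rotational symmetry through $\sqrt{y(C)^{2}+z(C)^{2}}$; these are equivalent. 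Your added alternating-minimisation sketch for sufficiency and your explicit worry about pinning down $C$ go beyond what the paper's proof supplies---the paper simply introduces $C$ as a generic point and cites the Rubinstein--Thomas--Weng methodology without a separate sufficiency argument.
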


\begin{proof}
We extend the methodology that has been used in \cite{RubinsteinThomasWeng:02}, to find the necessary and sufficient conditions to locate the two non-degenerate weighted Fermat-Torricelli points $O_{12}$ and $O_{34}$ at the interior of $A_{1}A_{2}A_{3}A_{4}.$

Without loss of generality, we may assume that $A_{1},$ $A_{2}$ lie on the x-Axis and satisfy $x_{1}<x_{2}.$
Let $C=(x(C),y(C),z(C)) \in \mathbb{R}^{3}.$ The point $C$ lies outside the right circular half cone $C_{A_{1}}$ whose vertex is $A_{1}$ and whose axis is $A_{1}x$ is consistent with the condition $\angle A_{2}A_{1}C < \angle A_{2}O_{12}O_{34}=\arccos(\frac{B_{1}^2-(\frac{B_{12}+B_{34}}{2})^2-B_{2}^2}{2 B_{2}\frac{B_{12}+B_{34}}{2}}).$

We set $C^{\prime}=(x(C),0,0)$

From $\triangle A_{1}C^{\prime}C,$ we obtain (\ref{ineq1}).
If $x(C)>x_{1},$ by working similarly, the condition to locate $C$ outside the right circular half cone with vertex $A_{2},$ gives (\ref{ineq2}).

Let $C_{A_{1}A_{2}}$ be the surface obtained by revolving $A_{1}A_{2}$ about $A_{1}A_{2}.$ The condition
$\angle A_{2}A_{1}C < \angle A_{2}O_{12}O_{34}$ is equivalent to $C$ lying outside the surface $C_{A_{1}A_{2}}.$
Hence, we get:
\[(\sqrt{y(C)^2+z(C)^2}+R_{12}\sin(\arccos(\frac{A_{1}A_{2}}{2R_{12}})))^{2}+(\frac{x_{1}+x_{2}}{2}-x_{1})^2>R_{12}^2,\]
which gives (\ref{ineq3}).
The circumradius $R_{12}$ is the radius of the inscribed circle w.r. to the triangle $\triangle A_{1}A_{12}A_{2}$
having lengths $A_{1}A_{2}=\lambda \frac{B_{12}+B_{34}}{2},$ $A_{1}A_{12}=\lambda B_{2}$ and $A_{2}A_{12}=\lambda B_{1},$ where $\lambda=\frac{A_{1}A_{2}}{\frac{B_{12}+B_{34}}{2}}.$ By following a similar process, we derive (\ref{ineq4}), ((\ref{ineq5})) and (\ref{ineq6}).
\end{proof}

By replacing $B_{1}=B_{2}=B_{3}=B_{4}=B_{12}=B_{34}=1,$ in Theorem~2, we derive the necessary and sufficient conditions for the existence of two non-degenerate unweighted Fermat-Torricelli points of the Steiner problem
for boundary tetrahedra. These conditions have been introduced in \cite{RubinsteinThomasWeng:02}.

\begin{corollary}\cite{RubinsteinThomasWeng:02}
The following inequalities provide the necessary and sufficient conditions  for the existence of the two non-degenerate unweighted Fermat-Torricelli points $O_{12}$ and $O_{34}:$
\begin{equation}\label{ineq1ust}
\frac{\sqrt{y(C)^2+z(C)^2}}{x_{1}-x(C)}>\sqrt{3}
\end{equation}
if $x(C)<x_{1},$
\begin{equation}\label{ineq2ust}
\frac{\sqrt{y(C)^2+z(C)^2}}{x_{C}-x_{1}}>\sqrt{3}
\end{equation}
if $x_{1}<x(C),$
\begin{equation}\label{ineq3ust}
(\sqrt{y(C)^2+z(C)^2}+\frac{r_{12}}{3})^{2}+(\frac{x_{1}+x_{2}}{2}-x(C))^2>(\frac{2r_{12}}{3})^{2}
\end{equation}

where

$r_{12}=\frac{\sqrt{3}}{2} A_{1}A_{2}$

and

\begin{equation}\label{ineq4ust}
\frac{\sqrt{y(C)^2+z(C)^2}}{x_{4}-x(C)}>\sqrt{3}
\end{equation}
if $x(C)<x_{4},$
\begin{equation}\label{ineq5ust}
\frac{\sqrt{y(C)^2+z(C)^2}}{x_{C}-x_{4}}>\sqrt{3}
\end{equation}
if $x_{4}<x(C),$
\begin{equation}\label{ineq6ust}
(\sqrt{y(C)^2+z(C)^2}+(\frac{r_{34}}{3})^{2}+(\frac{x_{4}+x_{3}}{2}-x(C))^2>(\frac{2r_{34}}{3})^{2},
\end{equation}

where

$r_{34}=\frac{\sqrt{3}}{2} A_{4}A_{3}.$

\end{corollary}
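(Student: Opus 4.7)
Since Corollary~1 is stated as a direct specialization of Theorem~\ref{conditionswst} to the unweighted case, my plan is to substitute $B_{1}=B_{2}=B_{3}=B_{4}=B_{12}=B_{34}=1$ into each of the six inequalities (\ref{ineq1})--(\ref{ineq6}) and verify that they reduce to (\ref{ineq1ust})--(\ref{ineq6ust}); the work is purely computational. I would start with the four ``cone'' inequalities (\ref{ineq1}), (\ref{ineq2}), (\ref{ineq4}), (\ref{ineq5}). With unit weights the argument of the $\arccos$ becomes $(1-1-1)/2=-\tfrac{1}{2}$, giving the angle $2\pi/3$. In the right triangle $\triangle A_{1}C^{\prime}C$ used in the proof of Theorem~\ref{conditionswst}, the angle at $A_{1}$ on the same side of the $x$-axis as the cone's axis is the supplementary angle $\pi/3$, whose tangent is $\sqrt{3}$; this yields the right-hand side of (\ref{ineq1ust}) and, by the same argument, of the remaining three cone inequalities.

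Next I would handle the ``revolution'' inequalities (\ref{ineq3}) and (\ref{ineq6}). With unit weights the auxiliary triangle $\triangle A_{1}A_{12}A_{2}$ from the proof of Theorem~\ref{conditionswst}, whose sides are $\lambda B_{2}$, $\lambda B_{1}$, $\lambda(B_{12}+B_{34})/2$ with $\lambda=2A_{1}A_{2}/(B_{12}+B_{34})$, collapses to an equilateral triangle of side $A_{1}A_{2}$. Its circumradius is $R_{12}=A_{1}A_{2}/\sqrt{3}$; rewritten in terms of the Corollary's $r_{12}=(\sqrt{3}/2)A_{1}A_{2}$ one finds $R_{12}=\tfrac{2}{3}r_{12}$, $\beta_{12}=\pi/6$, and $R_{12}\sin\beta_{12}=\tfrac{1}{3}r_{12}$. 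Substituting into (\ref{ineq3}) produces (\ref{ineq3ust}), and the identical computation on the opposite edge $A_{4}A_{3}$ yields (\ref{ineq6ust}).

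The main obstacle I expect is bookkeeping rather than geometry. In the cone inequalities one has to be careful with signs, since the formal substitution gives $\tan(2\pi/3)=-\sqrt{3}$, whereas the Corollary reports the positive $\sqrt{3}$ corresponding to the tangent of the complementary angle actually measured in $\triangle A_{1}C^{\prime}C$. Likewise, the denominator in the formula for $R_{12}$ must be interpreted consistently with the Heron-type normalisation that produces the correct circumradius of the equilateral auxiliary triangle. Once these conventions are pinned down, the six inequalities of Theorem~\ref{conditionswst} reduce term-by-term to the six unweighted inequalities, recovering the conditions of Rubinstein, Thomas and Weng \cite{RubinsteinThomasWeng:02}.
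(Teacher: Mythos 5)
Your proposal is correct and follows exactly the paper's own (one-line) argument: substitute $B_{1}=B_{2}=B_{3}=B_{4}=B_{12}=B_{34}=1$ into the six inequalities of Theorem~2. In fact you are more careful than the paper, since you explicitly resolve the sign issue with $\tan(2\pi/3)=-\sqrt{3}$ and note that the printed denominator of $R_{12}$ must be read as the Heron-type expression giving the circumradius $A_{1}A_{2}/\sqrt{3}=\tfrac{2}{3}r_{12}$ of the equilateral auxiliary triangle, without which the literal substitution would not produce (\ref{ineq3ust}) and (\ref{ineq6ust}).
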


\begin{theorem}
The solution of the weighted Steiner problem is a weighted Steiner tree in $\mathbb{R}^{3}$
whose nodes $O_{12}$ and $O_{34}$ (weighted Fermat-Torricelli points) are seen by the angles:
\begin{eqnarray}\label{FTangles}
 \cos\alpha_{12}& = & \frac{B_{ST}^2-B_{1}^2-B_{2}^2}{2B_{1}B_{2}}\nonumber \\
\cos\alpha_{1} & = & \frac{B_{1}^2-B_{2}^2-B_{ST}^2}{2B_{2}B_{ST}}\nonumber \\
\cos\alpha_{34} & = &\frac{B_{ST}^2-B_{3}^2-B_{4}^2}{2B_{3}B_{4}}\nonumber \\
\cos\alpha_{4} & =& \frac{B_{4}^2-B_{3}^2-B_{ST}^2}{2B_{3}B_{ST}}.
\end{eqnarray}

\end{theorem}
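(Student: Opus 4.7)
The plan is to derive the four angular relations in \eqref{FTangles} directly from the first-order optimality conditions for the minimization problem \eqref{equat1}. The objective $f(O_{12},O_{34})$ is a positively weighted sum of Euclidean distances, hence convex, and by Theorem~\ref{conditionswst} the minimum is attained at an interior pair of distinct points with $O_{12}\notin\{A_{1},A_{2}\}$, $O_{34}\notin\{A_{3},A_{4}\}$, and $O_{12}\ne O_{34}$, so $f$ is smooth there. Writing $B_{ST}\equiv\tfrac{B_{12}+B_{34}}{2}$ and using $\nabla_{P}\norm{P-A}=(P-A)/\norm{P-A}$ for $P\ne A$, the gradients $\nabla_{O_{12}}f=\vec{0}$ and $\nabla_{O_{34}}f=\vec{0}$ yield the two vectorial equilibrium equations
\[
B_{1}\hat{u}_{1}+B_{2}\hat{u}_{2}+B_{ST}\hat{u}_{l}=\vec{0},\qquad B_{3}\hat{u}_{3}+B_{4}\hat{u}_{4}-B_{ST}\hat{u}_{l}=\vec{0},
\]
where $\hat{u}_{i}$ is the unit vector from $A_{i}$ toward its neighbouring interior node and $\hat{u}_{l}$ is the unit vector from $O_{34}$ toward $O_{12}$.

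Next I would reverse signs so that at each node the three unit vectors all emanate outward from that node toward its three neighbours. Calling these $\hat{e}_{1},\hat{e}_{2},\hat{e}_{ST}$ at $O_{12}$ (pointing from $O_{12}$ to $A_{1},A_{2},O_{34}$ respectively), the equilibrium becomes $B_{1}\hat{e}_{1}+B_{2}\hat{e}_{2}+B_{ST}\hat{e}_{ST}=\vec{0}$, and by the definitions of the angles $\hat{e}_{1}\cdot\hat{e}_{2}=\cos\alpha_{12}$, $\hat{e}_{2}\cdot\hat{e}_{ST}=\cos\alpha_{1}$, $\hat{e}_{1}\cdot\hat{e}_{ST}=\cos\alpha_{2}$. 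The first two formulas of \eqref{FTangles} then fall out by isolating one term and squaring: from $\norm{B_{1}\hat{e}_{1}+B_{2}\hat{e}_{2}}^{2}=B_{ST}^{2}$ one reads off $\cos\alpha_{12}$, and from $\norm{B_{2}\hat{e}_{2}+B_{ST}\hat{e}_{ST}}^{2}=B_{1}^{2}$ one reads off $\cos\alpha_{1}$. Performing the analogous two squarings applied to the equilibrium at $O_{34}$ (with outward unit vectors toward $A_{3},A_{4},O_{12}$) delivers $\cos\alpha_{34}$ and $\cos\alpha_{4}$ by the identical algebra, with indices $1,2$ replaced by $3,4$.

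No step is genuinely hard; the whole proof is the weighted Fermat-Torricelli vector balance combined with the law of cosines. The only point demanding care is sign bookkeeping: the unit vector along the segment $O_{12}O_{34}$ must always be taken outward from the node under consideration, pointing from $O_{12}$ to $O_{34}$ when writing the equilibrium at $O_{12}$ and from $O_{34}$ to $O_{12}$ when writing it at $O_{34}$. With this convention every term on the right-hand side of \eqref{FTangles} acquires the correct sign automatically, and the four formulas are obtained without any further computation.
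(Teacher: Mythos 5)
Your proposal is correct and follows essentially the same route as the paper: both derive the two vector equilibrium equations at $O_{12}$ and $O_{34}$ from the first-order optimality conditions of \eqref{equat1} and then extract the angles. The only (minor) difference is in the last step, where the paper passes through the weighted sine law $\frac{B_{1}}{\sin\alpha_{1}}=\frac{B_{2}}{\sin\alpha_{2}}=\frac{B_{ST}}{\sin\alpha_{12}}$ quoted from \cite{Zach/Zou:08}, whereas you isolate one term of the balance equation and square it (law of cosines), which is slightly more self-contained but not a genuinely different argument.
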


\begin{proof}

By differentiating (\ref{equat1}) with respect to $x_{0}, y_{0},
z_{0}, x_{0^{\prime}}, y_{0^{\prime}}, z_{0^{\prime}}$ we get:

\begin{equation}\label{equat2}
B_{1}\frac{\vec{a}_{1}}{\|\vec{a}_{1}\|}+B_{2}\frac{\vec{a}_{2}}{\|\vec{a}_{2}\|}-\frac{B_{12}+B_{34}}{2}\frac{\vec{l}}{\|\vec{l}\|}=0
\end{equation}

and

\begin{equation}\label{equat3}
B_{3}\frac{\vec{a}_{3}}{\|\vec{a}_{3}\|}+B_{4}\frac{\vec{a}_{4}}{\|\vec{a}_{4}\|}+\frac{B_{12}+B_{34}}{2}\frac{\vec{l}}{\|\vec{l}\|}=0.
\end{equation}

By adding (\ref{equat2}) and (\ref{equat3}) we have:

\begin{equation}\label{equat4}
\sum_{i=1}^{4}B_{i}\frac{\vec{a}_{i}}{\|\vec{a}_{i}\|}=0
\end{equation}

which is the translation of $\triangle A_{4}O_{34}A_{3}\to
\triangle A_{4^{\prime}}O_{12}A_{3^{\prime}}$ with respect to
$\vec{l}$ or $\triangle A_{1}O_{12}A_{2}\to \triangle
A_{1^{\prime}}O_{34}A_{2^{\prime}}$ with respect to $\vec{l}$ (see
Fig.~\ref{figg1}).

The solution of (\ref{equat4}) with respect to the tetrahedron
$A_{1}A_{2}A_{3^{\prime}}A_{4^{\prime}}$ or
$A_{1^{\prime}}A_{2^{\prime}}A_{3}A_{4}$ (see \cite{Zach/Zou:09})
explains two properties of (\ref{equat1}):

(1) the invariance of $O_{12}$ or $O_{34}$

(2) the translation $O_{12}\to O_{34}$ or $O_{34}\to O_{12}.$

The problem is that $\vec{l}$ is unknown and (\ref{equat4}) cannot
help for the solution of (\ref{equat1}).

The equations (\ref{equat2}) and (\ref{equat3}), which give
$O_{12}$ and $O_{34}$ the weighted Fermat-Torricelli points of
$\triangle A_{1}A_{2}O_{34}$ and $\triangle A_{3}A_{4}O_{12},$
respectively, explain that $A_{1}A_{2}O_{12}O_{34}$ and
$A_{3}A_{4}O_{12}O_{34}$ are two planes and the line defined by
$O_{12}O_{34}$ intersect $A_{1}A_{2}$ and $A_{3}A_{4}$ at the
points $T_{12}$ and $T_{34},$ respectively (Fig.~\ref{figg2}). We
set $B_{ST}\equiv \frac{B_{12}+B_{34}}{2}$

\begin{figure}
\centering
\includegraphics[scale=0.90]{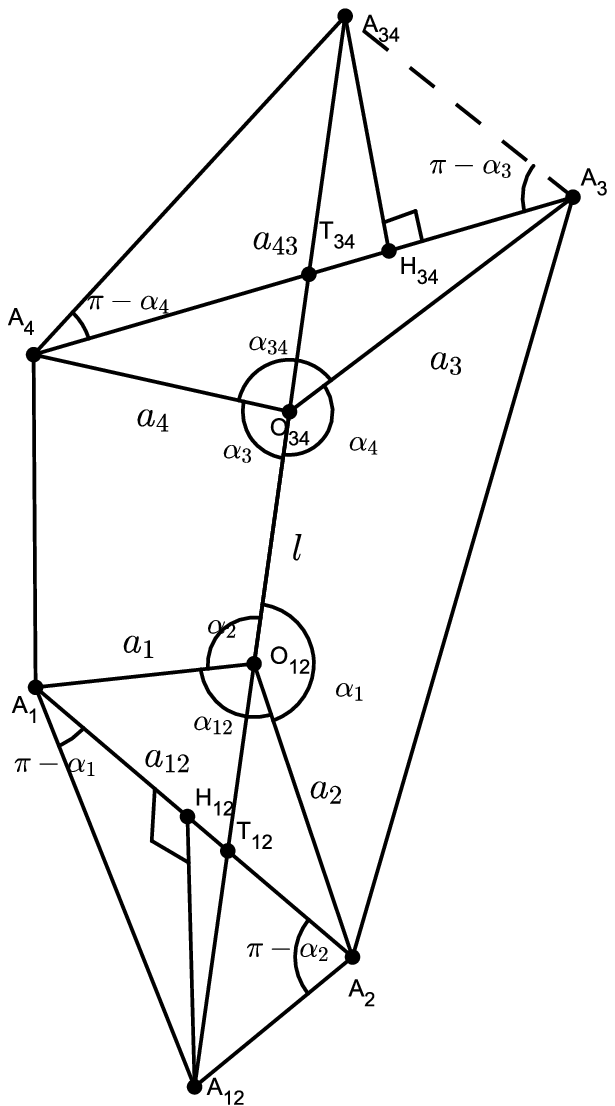}
\caption{} \label{figg2}
\end{figure}

The solution of (\ref{equat2}) is (Fig.~\ref{figg2}, see
\cite{Zach/Zou:08}):
\begin{equation}\label{equat5}
\frac{B_{1}}{\sin\alpha_{1}}=\frac{B_{2}}{\sin\alpha_{2}}=\frac{B_{ST}}{\sin\alpha_{12}}.
\end{equation}

The solution of (\ref{equat3}) is (Fig.~\ref{figg2}, see
\cite{Zach/Zou:08}):
\begin{equation}\label{equat6}
\frac{B_{3}}{\sin\alpha_{3}}=\frac{B_{4}}{\sin\alpha_{4}}=\frac{B_{ST}}{\sin\alpha_{34}}.
\end{equation}

From (\ref{equat5}) and (\ref{equat6}), we obtain (\ref{FTangles}).











\end{proof}

The Euclidean distance of the common perpendicular $H$ is given by:
\begin{equation}\label{equat8}
H =\frac{ \left|
\begin{array}{ccccc}
x_{4}-x_{1}      & y_{4}-y_{1}      & z_{4}-z_{1}   \\
x_{2}-x_{1}      & y_{2}-y_{1}      & z_{2}-z_{1}          \\
x_{3}-x_{4}      & y_{3}-y_{4}      & z_{3}-z_{4}          \\

\end{array} \right|}{a_{12}a_{34}\sin\varphi},
\end{equation}

We denote by $A_{4}^{\prime \prime}$ the intersection point of  the line defined by the $A_{4}A_{3}$ and the line defined by the common perpendicular of $A_{1}A_{2}$ and $A_{4}A_{3}$ and by $A_{1}^{\prime \prime}$ the intersection point of  the line defined by $A_{1}A_{2}$ and the line defined by the common perpendicular of $A_{1}A_{2}$ and $A_{4}A_{3},$ by $T_{12}$ the intersection point of the line defined by $O_{12}O_{34}$ and the line defined by $A_{1}A_{2}$ and by $T_{34}$ the intersection point of the line defined by $O_{12}O_{34}$ and the line defined by $A_{4}A_{3},$ $M_{12}$ the midpoint of $A_{1}A_{2}$ and $M_{34}$ the midpoint of $A_{4}A_{3}.$

We assume that $A_{1}^{\prime\prime}\notin [A_{1},A_{2}]$ and $A_{4}^{\prime \prime}\notin [A_{4},A_{3}].$
and $A_{1}^{\prime\prime}$ lies on the half line w.r to $A_{1}$ and $A_{4}^{\prime\prime}$ lies on the half line w.r. to $A_{4}.$

We denote by $T_{12}^{\prime}$ the orthogonal projection of $T_{12}$ to the parallel line of the line defined by $A_{3}A_{4}$ and by $A_{4}^{\prime}$ the point which lies on the half line w.r. to $A_{4},$ such that: $H=A_{4}^{\prime}T_{12}^{\prime},$ $\phi=\angle T_{12}A_{1}^{\prime\prime}T_{12}^{\prime}$ and $A_{1}^{\prime\prime}T_{12}^{\prime}A_{4}^{\prime}A_{4}^{\prime\prime}$ is a parallelogram, by $A_{12}$ the vertex of $\triangle A_{1}A_{12}A_{2},$ such that: $\angle A_{1}A_{12}A_{2}=\pi - \alpha_{12},$ $\angle A_{12}A_{1}A_{2}=\pi-\alpha_{1}$ and $\angle A_{1}A_{2}A_{12}=\pi-\alpha_{2},$ by $A_{34}$ the vertex of $\triangle A_{4}A_{34}A_{3},$ such that: $\angle A_{4}A_{34}A_{3}=\pi - \alpha_{34},$ $\angle A_{34}A_{4}A_{3}=\pi-\alpha_{4}$ and $\angle A_{4}A_{3}A_{34}=\pi-\alpha_{3},$ by $H_{12}$ the trace of the height of $\triangle A_{1}A_{12}A_{2}$ w.r to the base $A_{1}A_{2}$ and by $A_{34}$ the vertex of $\triangle A_{4}A_{34}A_{3},$ such that: $\angle A_{4}A_{34}A_{3}=\pi - \alpha_{34},$ $\angle A_{34}A_{4}A_{3}=\pi-\alpha_{4}$ and $\angle A_{4}A_{3}A_{34}=\pi-\alpha_{3}$ and by $H_{34}$ the trace of the height of $\triangle A_{4}A_{34}A_{3}$ w.r to the base $A_{4}A_{3}.$

Furthermore, we set $H\equiv A_{4}^{\prime \prime}A_{1}^{\prime \prime},$ $t_{34}\equiv A_{4}^{\prime\prime} T_{34}$
$t_{12}\equiv A_{1}^{\prime \prime} T_{12}$ $k_{1}\equiv A_{1}^{\prime\prime} A_{1}$ and $k_{2}\equiv A_{4}^{\prime\prime} A_{4},$ $m_{12}\equiv A_{1}^{\prime\prime}M_{12}$ and $m_{34}\equiv A_{4}^{\prime\prime}M_{34},$ $h_{12}^{\prime}\equiv A_{1}^{\prime\prime}H_{12}$ and $h_{34}^{\prime}\equiv A_{4}^{\prime\prime}H_{34}.$

\begin{theorem}\label{impthm1st}
The following system of the two equations w.r. to $t_{34}$ and $t_{12}$ allow us to compute the position of
the weighted Simpson line $O_{12}O_{34}$ of the weighted full Steiner tree for $A_{1}A_{2}A_{3}A_{4}:$
\begin{equation}\label{firsteq1wst}
\frac{t_{34}-t_{12}\cos\phi}{\sqrt{H^2+t_{12}\sin^{2}\phi}}=\frac{h_{34}^{\prime}-t_{34}}{r_{34}}
\end{equation}

and

\begin{equation}\label{secondeq2wst}
\frac{t_{12}-t_{34}\cos\phi}{\sqrt{H^2+t_{34}\sin^{2}\phi}}=\frac{h_{12}^{\prime}-t_{12}}{r_{12}}
\end{equation}

\end{theorem}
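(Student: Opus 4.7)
The plan is to exploit the planar weighted Melzak-Simpson construction inside each of the two weighted Steiner planes $\Pi_{12}\equiv A_{1}A_{2}O_{12}O_{34}$ and $\Pi_{34}\equiv A_{3}A_{4}O_{12}O_{34}$, whose coplanarity was observed in the proof of Theorem~3 from equations (\ref{equat2}) and (\ref{equat3}). Equation (\ref{equat2}) expresses that $O_{12}$ is the weighted Fermat-Torricelli point of the planar triangle $\triangle A_{1}A_{2}O_{34}$ with weights $B_{1},B_{2},B_{ST}$, and by (\ref{equat5}) the sides of the auxiliary triangle $\triangle A_{1}A_{12}A_{2}$ defined in the text are in ratio $B_{ST}:B_{2}:B_{1}$, making it precisely the Melzak force triangle of this three-point weighted problem (cf.\ \cite{Gue/Tes:02}). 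The planar weighted Melzak theorem then yields collinearity of $A_{12}$, $O_{12}$ and $O_{34}$; symmetrically in $\Pi_{34}$, the points $A_{34}$, $O_{34}$ and $O_{12}$ are collinear. Since $T_{12}$ and $T_{34}$ lie on the line $O_{12}O_{34}$ by definition, the six points $A_{12},T_{12},O_{12},O_{34},T_{34},A_{34}$ are all collinear on the weighted Simpson line.

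Next I would fix Cartesian coordinates adapted to the common perpendicular: place $A_{1}^{\prime\prime}$ at the origin, the edge $A_{1}A_{2}$ along the $x$-axis, and the common perpendicular along the $z$-axis, so that $A_{4}^{\prime\prime}=(0,0,H)$ and the line $A_{4}A_{3}$ sits in the plane $z=H$ with unit direction $(\cos\phi,\sin\phi,0)$. Then $T_{12}=(t_{12},0,0)$ and $T_{34}=(t_{34}\cos\phi,\,t_{34}\sin\phi,\,H)$, and the unit vector inside $\Pi_{12}$ orthogonal to $A_{1}A_{2}$ and pointing toward $T_{34}$ is $\hat{u}=(0,t_{34}\sin\phi,H)/\sqrt{H^{2}+t_{34}^{2}\sin^{2}\phi}$. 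The Melzak construction places $A_{12}$ on the opposite side of $A_{1}A_{2}$ at foot $H_{12}=(h_{12}^{\prime},0,0)$ with altitude $r_{12}$, so that $A_{12}=(h_{12}^{\prime},0,0)-r_{12}\hat{u}$. Imposing the collinearity $T_{12}-A_{12}=\lambda(T_{34}-T_{12})$ for some scalar $\lambda$, the $y$- and $z$-components each yield $\lambda=r_{12}/\sqrt{H^{2}+t_{34}^{2}\sin^{2}\phi}$, and substitution in the $x$-component rearranges directly to (\ref{secondeq2wst}). A mirror-symmetric computation inside $\Pi_{34}$, with $A_{34}$ placed on the opposite side of $A_{4}A_{3}$ from $T_{12}$ at altitude $r_{34}$ above foot $H_{34}$, produces (\ref{firsteq1wst}) from the collinearity of $A_{34},T_{34},T_{12}$.

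The main obstacle will be the orientation bookkeeping. Under the standing hypothesis $A_{1}A_{4}+A_{2}A_{3}>A_{1}A_{2}+A_{3}A_{4}$ and the placement assumptions $A_{1}^{\prime\prime}\notin[A_{1},A_{2}]$ and $A_{4}^{\prime\prime}\notin[A_{4},A_{3}]$ on the prescribed half-lines, one must verify that the Simpson line meets the edges and apices in the cyclic order $A_{12},T_{12},O_{12},O_{34},T_{34},A_{34}$, so that the scalar $\lambda$ above is positive and the signed altitudes $r_{12},r_{34}$ carry the correct signs. A subsidiary verification is the planar weighted Melzak reduction itself; this is classical in $\mathbb{R}^{2}$ (\cite{Gue/Tes:02}), and since the restriction of (\ref{equat2}) or (\ref{equat3}) to each Steiner plane is genuinely two-dimensional, no argument beyond the planar case is required here.
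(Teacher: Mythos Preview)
Your proposal is correct and follows essentially the same geometric idea as the paper: both arguments hinge on the collinearity of the Melzak apex $A_{34}$ (resp.\ $A_{12}$) with $T_{12}$ and $T_{34}$ on the weighted Simpson line inside the corresponding Steiner plane. The paper expresses this collinearity as the similarity $\triangle A_{4}^{\prime}T_{12}T_{34}\sim\triangle A_{34}H_{34}T_{34}$ (with $A_{4}^{\prime}$ an auxiliary foot on the line $A_{4}A_{3}$), whereas you set up explicit Cartesian coordinates adapted to the common perpendicular and read off the identical ratio from the $x$-component of the vector collinearity condition; these are two presentations of the same computation, and the paper additionally spells out $r_{34}$ and $h_{34}^{\prime}$ in terms of the weights via the sine law in $\triangle A_{4}A_{34}A_{3}$, which is auxiliary to the theorem as stated.
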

\begin{proof}


From the similarity of triangles $\triangle A_{4}^{\prime}T_{12}T_{34},$ $\triangle A_{34}H_{34}T_{34}$

($\triangle A_{4}^{\prime}T_{12}T_{34} \sim \triangle A_{34}H_{34}T_{34}$) we obtain (\ref{firsteq1wst}).

By applying the sine law in $\triangle A_{4}A_{34}A_{3},$ we get:

\begin{equation}\label{sinwst1}
A_{3}A_{34}=a_{34}\frac{\sin(\pi-\alpha_{4})}{\sin(\pi-\alpha_{34})}
\end{equation}

and by replacing (\ref{equat5}) in (\ref{sinwst1}), we get:

\begin{equation}\label{sinwst1n}
A_{3}A_{34}=a_{34}\frac{B_{4}}{B_{0}}
\end{equation}

By applying the sine law in $\triangle H_{34}A_{34}A_{3}$ and taking into account (\ref{sinwst1n}) we get:

\begin{equation}\label{radius34}
r_{34}=\frac{B_{4}}{B_{ST}}a_{34}\sin(\arccos(\frac{B_{3}^2-B_{ST}^2-B_{4}^2}{2 B_{ST}B_{4}}))
\end{equation}

From $\triangle A_{4}H_{34}A_{34},$ we derive:

\begin{equation}\label{sinwst3n}
A_{4}H_{34}=r_{34}\cot(\pi-\alpha_{4}).
\end{equation}

Thus, we get:

\begin{equation}\label{calch34prime}
h_{34}^{\prime}=A_{4}^{\prime\prime}A_{4}+r_{34}\cot(\pi-\alpha_{4}).
\end{equation}

By replacing (\ref{calch34prime}) and (\ref{radius34}) in (\ref{firsteq1wst}), we derive the first equation
which depends on $t_{12},$ $t_{34}$ and the given weights $B_{3},$ $B_{4}$ and $B_{ST}.$

By working similarly, and by exchanging the indices $4\to 1,$ and $3\to 2,$ we obtain (\ref{secondeq2wst})
which depends on $t_{12},$ $t_{34}$ and the given weights $B_{1},$ $B_{2}$ and $B_{ST}.$

\end{proof}

\begin{remark}
By solving (\ref{firsteq1wst}) w.r. to $t_{34}$ and (\ref{secondeq2wst})  w.r. to $t_{12},$
we derive that $t_{34}=f_{34}(t_{12})$ and $t_{12}=f_{12}(t_{34}).$
By setting $t_{12}(0)=m_{12}$ we obtain an iteration process $t_{34}(0)=f_{34}(m_{12}),$ $t_{12}(1)=f_{12}(t_{34}(0))=f_{12}(f_{34}(m_{12})),$ $t_{34}(1)=f_{34}(t_{12}(1))=f_{34}(f_{12}(f_{34}(m_{12})))\cdots.$

We assume that $t_{12}=t_{12}(n)=f_{12}f_{34}\cdots (n times)f_{12}f_{34}(m_{12})$
and $t_{34}=t_{34}(n)=f_{34}(f_{12}f_{34}\cdots (ntimes)(f_{34}(m_{12}))).$
\end{remark}

\begin{proposition}\label{prop1stn}
\begin{equation}\label{firsteq1st}
\frac{t_{34}-t_{12}\cos\phi}{\sqrt{H^2+t_{12}\sin^{2}\phi}}=\frac{m_{34}-t_{34}}{a_{34}\frac{\sqrt{3}}{2}}
\end{equation}
and

\begin{equation}\label{secondeq2st}
\frac{t_{12}-t_{34}\cos\phi}{\sqrt{H^2+t_{34}\sin^{2}\phi}}=\frac{m_{12}-t_{12}}{a_{12}\frac{\sqrt{3}}{2}}
\end{equation}
\end{proposition}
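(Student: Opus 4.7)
The plan is to derive Proposition~\ref{prop1stn} directly from Theorem~\ref{impthm1st} by specializing all weights to unity, namely $B_{1}=B_{2}=B_{3}=B_{4}=B_{12}=B_{34}=1$, so that $B_{ST}=\frac{B_{12}+B_{34}}{2}=1$. In this unweighted situation the two constructed triangles $\triangle A_{1}A_{12}A_{2}$ and $\triangle A_{4}A_{34}A_{3}$ become equilateral, which is the geometric fact that collapses (\ref{firsteq1wst})--(\ref{secondeq2wst}) to (\ref{firsteq1st})--(\ref{secondeq2st}).

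First I would substitute the unit weights into the Fermat--Torricelli angle formulas (\ref{FTangles}) of Theorem~3 to obtain $\alpha_{12}=\alpha_{34}=\alpha_{1}=\alpha_{2}=\alpha_{3}=\alpha_{4}=\frac{2\pi}{3}$. Consequently the angles $\pi-\alpha_{34}$, $\pi-\alpha_{3}$, $\pi-\alpha_{4}$ that define $\triangle A_{4}A_{34}A_{3}$ are all equal to $\frac{\pi}{3}$, so this triangle is equilateral with side $a_{34}$; the same holds for $\triangle A_{1}A_{12}A_{2}$ with side $a_{12}$.

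Next I would evaluate $r_{34}$ from (\ref{radius34}) with $B_{3}=B_{4}=B_{ST}=1$, giving $r_{34}=a_{34}\sin(\arccos(-\tfrac{1}{2}))=\tfrac{\sqrt{3}}{2}\,a_{34}$, which is simply the altitude of the equilateral triangle. For $h_{34}^{\prime}$ I would use (\ref{calch34prime}) together with $\cot(\pi-\alpha_{4})=\cot\tfrac{\pi}{3}=\tfrac{1}{\sqrt{3}}$, producing $h_{34}^{\prime}=k_{2}+\tfrac{a_{34}}{2}$. Since in an equilateral triangle the foot of the altitude coincides with the midpoint of the base, $H_{34}=M_{34}$, so $h_{34}^{\prime}=A_{4}^{\prime\prime}M_{34}=m_{34}$. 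Plugging $r_{34}=\tfrac{\sqrt{3}}{2}a_{34}$ and $h_{34}^{\prime}=m_{34}$ into (\ref{firsteq1wst}) yields (\ref{firsteq1st}). An entirely analogous computation, exchanging indices $4\to 1$, $3\to 2$, on $\triangle A_{1}A_{12}A_{2}$ turns (\ref{secondeq2wst}) into (\ref{secondeq2st}).

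There is no real obstacle here; the only mildly delicate point is verifying that the midpoint identification $H_{34}=M_{34}$ (and correspondingly $H_{12}=M_{12}$) is the precise geometric reason behind the replacement $h_{34}^{\prime}\rightsquigarrow m_{34}$, and that the projection geometry introduced before Theorem~\ref{impthm1st} (the parallelogram $A_{1}^{\prime\prime}T_{12}^{\prime}A_{4}^{\prime}A_{4}^{\prime\prime}$, the angle $\phi$, and the common perpendicular $H$) is preserved under the specialization since none of these quantities depend on the weights. Once these observations are in place, Proposition~\ref{prop1stn} follows by direct substitution.
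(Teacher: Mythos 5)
Your proposal is correct and follows exactly the paper's route: the paper's own proof is the one-line substitution $B_{1}=B_{2}=B_{3}=B_{4}=B_{ST}=1$ into (\ref{firsteq1wst}) and (\ref{secondeq2wst}), and you simply make explicit the intermediate computations ($r_{34}=\tfrac{\sqrt{3}}{2}a_{34}$, $H_{34}=M_{34}$, hence $h_{34}^{\prime}=m_{34}$) that justify why the substitution produces the stated equations. No discrepancy to report.
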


\begin{proof}
By replacing $B_{1}=B_{2}=B_{3}=B_{4}=B_{ST}=1$ in (\ref{firsteq1wst})
and (\ref{secondeq2wst}), we derive (\ref{firsteq1st}) and (\ref{secondeq2st}), respectively.
\end{proof}

\begin{remark}
We note that Proposition~1 has been established in \cite{RubinsteinThomasWeng:02} by using two coordinate systems w.r to $A_{4}^{\prime\prime}$ and $A_{1}^{\prime\prime}$ and
by solving (\ref{firsteq1st}) w.r. to $t_{34}$ and (\ref{secondeq2st})  w.r. to $t_{12},$
they derive that $t_{34}=g_{34}(t_{12}),$ $t_{12}=g_{12}(t_{34})$
and by setting $t_{12}(0)=A_{1}^{\prime\prime}A_{1}$ or $t_{34}=A_{4}^{\prime\prime}A_{4},$ they obtain an iteration sequence $t_{34}(0)=g_{34}(A_{1}^{\prime\prime}A_{1}),$ $t_{12}(1)=g_{12}(t_{34}(0))=g_{12}(g_{34}(A_{1}^{\prime\prime}A_{1})),$ $t_{34}(1)=g_{34}(t_{12}(1))=g_{34}(g_{12}(g_{34}(A_{1}^{\prime\prime}A_{1})))\cdots.$
The convergence of this iteration sequence is proved in \cite{RubinsteinThomasWeng:02} by showing that it is monotone decreasing.

\end{remark}

We denote by $F$ the corresponding Fermat-Torricelli point of $A_{1}A_{2}A_{3}A_{4}$ having the property:
\[A_{1}F+A_{2}F+A_{3}F+A_{4}F\to min.\]
The solution of this objective function yields the (unweighted) Fermat-Torricelli tree $\{A_{1}F,A_{2}F,A_{3}F,A_{4}F\}.$

\begin{theorem}
The following system of the three equations w.r. to $t_{34},$ $t_{12}$ and $\angle A_{4}FA_{3}$ allow us to compute the position of the line defined by $T_{12}T_{34}$ of the (unweighted) Fermat-Torricelli tree of $A_{1}A_{2}A_{3}A_{4}:$

\begin{equation}\label{firsteq1ft}
\frac{t_{34}-t_{12}\cos\phi}{\sqrt{H^2+t_{12}\sin^{2}\phi}}=\frac{m_{34}-t_{34}}{\frac{a_{34}}{2}\tan\frac{\angle A_{4}FA_{3}}{2}},
\end{equation}

\begin{equation}\label{secondeq2ft}
\frac{t_{12}-t_{34}\cos\phi}{\sqrt{H^2+t_{34}\sin^{2}\phi}}=\frac{m_{12}-t_{12}}{\frac{a_{12}}{2}\tan\frac{\angle A_{4}FA_{3}}{2}}
\end{equation}

and

\begin{equation}\label{calcft3}
\cot\frac{\angle A_{4}FA_{3}}{2}=\frac{2(H^2+k_{1}(t_{12}-t_{34}\cos\varphi))+k_{2}(t_{34}-t_{12}\cos\varphi)}{(t_{12}-k_{1})\sqrt{H^2+t_{34}^2\sin^{2}\varphi}+(t_{34}-k_{2})\sqrt{H^2+t_{12}^2\sin^{2}\varphi}}
\end{equation}
\end{theorem}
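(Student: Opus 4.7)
The plan is to adapt the similar-triangles argument of Theorem~\ref{impthm1st} to the single-node unweighted Fermat--Torricelli setting, by replacing the apexes $A_{12},A_{34}$ of the weighted Simpson construction by apexes of isoceles triangles whose common apex angle is governed by the angle $\angle A_{4}FA_{3}$ at $F$, and then appending one extra scalar equation that pins $F$ down on the resulting line $T_{12}T_{34}$.

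First I identify the Simpson-type line. The stationarity condition $\sum_{i=1}^{4}\overrightarrow{FA_{i}}/\|FA_{i}\|=\vec 0$ gives $\hat u_{1}+\hat u_{2}=-(\hat u_{3}+\hat u_{4})$, where $\hat u_{i}:=\overrightarrow{FA_{i}}/\|FA_{i}\|$; taking norms forces $\angle A_{1}FA_{2}=\angle A_{4}FA_{3}$. Since $\hat u_{1}+\hat u_{2}$ bisects $\angle A_{1}FA_{2}$ in $\Pi_{12}:=\mathrm{aff}(A_{1},A_{2},F)$ while $\hat u_{3}+\hat u_{4}$ bisects $\angle A_{4}FA_{3}$ in $\Pi_{34}:=\mathrm{aff}(A_{3},A_{4},F)$, and the two sums are antiparallel, the two bisectors glue into a single line $\Pi_{12}\cap\Pi_{34}$ through $F$, meeting $A_{1}A_{2}$ at $T_{12}$ and $A_{4}A_{3}$ at $T_{34}$.

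For (\ref{firsteq1ft}) and (\ref{secondeq2ft}) I rerun the similarity of Theorem~\ref{impthm1st} with a new apex. In $\Pi_{34}$, let $A_{34}^{*}$ be the isoceles apex on the side of $A_{4}A_{3}$ opposite $F$, with apex angle $\pi-\angle A_{4}FA_{3}$. By the inscribed-angle theorem applied to the circumcircle of $\triangle A_{3}A_{4}F$, the bisector of $\angle A_{4}FA_{3}$ at $F$ meets the opposite arc at its midpoint, which is precisely $A_{34}^{*}$; hence $T_{12},F,T_{34},A_{34}^{*}$ are collinear. Since $\triangle A_{4}A_{34}^{*}A_{3}$ is isoceles, the foot of its altitude is the midpoint $M_{34}$ (so $h_{34}'$ of Theorem~\ref{impthm1st} collapses to $m_{34}$) and its altitude length is $(a_{34}/2)\tan(\angle A_{4}FA_{3}/2)$; substituting into the similarity $\triangle A_{4}'T_{12}T_{34}\sim\triangle A_{34}^{*}H_{34}T_{34}$ delivers (\ref{firsteq1ft}). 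The symmetric construction in $\Pi_{12}$, with an analogous apex $A_{12}^{*}$ and apex angle $\pi-\angle A_{1}FA_{2}=\pi-\angle A_{4}FA_{3}$, gives (\ref{secondeq2ft}).

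For (\ref{calcft3}) I compute $\cot(\angle A_{4}FA_{3}/2)$ directly from the position of $F$ on the line $T_{12}T_{34}$. In the coordinates $A_{1}''=(0,0,0)$, $A_{1}=(k_{1},0,0)$, $A_{4}''=(0,0,H)$, $A_{4}=(k_{2}\cos\varphi,k_{2}\sin\varphi,H)$ of Theorem~\ref{impthm1st}, I parameterise $F=(1-s)T_{12}+sT_{34}$ and pin $s$ down via the angle-bisector identity $A_{4}T_{34}/A_{3}T_{34}=FA_{4}/FA_{3}$ in $\Pi_{34}$ together with its analogue at $T_{12}$ in $\Pi_{12}$; the segments $A_{1}T_{12}=t_{12}-k_{1}$ and $A_{4}T_{34}=t_{34}-k_{2}$ then enter directly, and the perpendicular-type lengths $\sqrt{H^{2}+t_{12}^{2}\sin^{2}\varphi}$, $\sqrt{H^{2}+t_{34}^{2}\sin^{2}\varphi}$ reappear as the segments $A_{4}'T_{12}$ and its symmetric twin already present in Theorem~\ref{impthm1st}. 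Finally I compute $\cot(\Theta/2)=(1+\cos\Theta)/\sin\Theta$ for $\Theta:=\angle A_{4}FA_{3}$, with $\cos\Theta=\overrightarrow{FA_{3}}\cdot\overrightarrow{FA_{4}}/(FA_{3}\cdot FA_{4})$ and $\sin\Theta$ read off from $\|\overrightarrow{FA_{3}}\times\overrightarrow{FA_{4}}\|/(FA_{3}\cdot FA_{4})$, and collect terms to match the right-hand side of (\ref{calcft3}). The main obstacle lies in this last simplification: the clean form of (\ref{calcft3}) requires recognising $H^{2}+k_{i}(t_{j}-t_{i}\cos\varphi)$ as a coordinate-form projection inner product between the common-perpendicular offsets and the Simpson-line direction, and factoring the numerator and denominator so that the two perpendicular-type lengths cleanly separate; the first two steps are, by contrast, routine adaptations of Theorem~\ref{impthm1st} once the inscribed-angle identification of $A_{34}^{*}$ and $A_{12}^{*}$ is in place.
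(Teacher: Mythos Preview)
Your derivation of (\ref{firsteq1ft}) and (\ref{secondeq2ft}) matches the paper's: both rerun the similarity argument of Theorem~\ref{impthm1st} with the weighted apex replaced by the isoceles apex $A_{34}^{*}$ (resp.\ $A_{12}^{*}$), once one notes that the equality $\angle A_{1}FA_{2}=\angle A_{4}FA_{3}$ forces $H_{34}=M_{34}$ and $r_{34}=\tfrac{a_{34}}{2}\tan(\angle A_{4}FA_{3}/2)$. Your inscribed-angle justification of the collinearity of $T_{12},F,T_{34},A_{34}^{*}$ is a clean addition that the paper simply asserts.

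For (\ref{calcft3}), however, you take a genuinely different and heavier route. The paper never parameterises $F$ on the segment or solves for its position via the bisector ratio. Instead it uses the scalar decomposition $T_{12}T_{34}=T_{12}F+FT_{34}$ and computes each summand by the sine rule in the planar triangles $A_{1}FT_{12}\subset\Pi_{12}$ and $A_{4}FT_{34}\subset\Pi_{34}$ (the bisector property gives the angle at $F$ as $\tfrac{1}{2}\angle A_{4}FA_{3}$, the angle at $T_{12}$ is $\varphi_{12}$), obtaining
\[
T_{12}F=(t_{12}-k_{1})\bigl(\sin\varphi_{12}\cot\tfrac{\angle A_{4}FA_{3}}{2}+\cos\varphi_{12}\bigr),\quad
T_{34}F=(t_{34}-k_{2})\bigl(\sin\varphi_{34}\cot\tfrac{\angle A_{4}FA_{3}}{2}+\cos\varphi_{34}\bigr).
\]
Adding and solving the resulting \emph{linear} equation for $\cot(\tfrac{1}{2}\angle A_{4}FA_{3})$, then substituting the already-available expressions $\cos\varphi_{12}=(t_{12}-t_{34}\cos\varphi)/T_{12}T_{34}$, $\sin\varphi_{12}=\sqrt{H^{2}+t_{34}^{2}\sin^{2}\varphi}/T_{12}T_{34}$ (and their $34$-counterparts), gives (\ref{calcft3}) after a short expansion. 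Your plan---pin down $s$ from $A_{4}T_{34}/A_{3}T_{34}=FA_{4}/FA_{3}$ and then compute $\cot(\Theta/2)=(1+\cos\Theta)/\sin\Theta$ from dot and cross products---is sound in principle, but the bisector ratio becomes quadratic in $s$ once the distance square roots are cleared, and the subsequent half-angle simplification does not linearise cleanly; the ``main obstacle'' you flag is real, and the paper's length-decomposition device sidesteps it entirely.
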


\begin{proof}
The line (Simpson line ) which passes through $F$ intersects $A_{1}A_{2}$ at $T_{12}$ and $A_{4}A_{3}$ at $T_{34}$ and $\angle A_{4}FA_{3}=\angle A_{1}FA_{2},$ such that: $\angle A_{4}A_{3}A_{34}=\angle A_{3}A_{4}A_{34}=\frac{ A_{4}FA_{3}}{2},$ and $\angle A_{1}A_{2}A_{12}=\angle A_{2}A_{1}A_{12}=\frac{ A_{4}FA_{3}}{2}.$ Furthermore, we obtain $H_{34}=M_{34}$ and $H_{12}=M_{12}.$ These equalities yield $h_{34}^{\prime}=m_{34}$ and $h_{12}^{\prime}=m_{12}.$

From the similarity of triangles $\triangle A_{4}^{\prime}T_{12}T_{34},$ $\triangle A_{34}M_{34}T_{34}$

($\triangle A_{4}^{\prime}T_{12}T_{34} \sim \triangle A_{34}H_{34}T_{34}$) we derive (\ref{firsteq1ft}).

The second equation shall be derived by using the following relations:

\begin{equation}\label{sft1n}
T_{12}T_{34}=T_{12}F+FT_{34},
\end{equation}
\begin{equation}\label{sft2n}
T_{12}F=(t_{12}-k_{1})(\sin\varphi_{12}\cot\frac{ A_{4}FA_{3}}{2}+\cos\varphi_{12})
\end{equation}
\begin{equation}\label{sft3n}
T_{34}F=(t_{34}-k_{2})(\sin\varphi_{34}\cot\frac{ A_{4}FA_{3}}{2}+\cos\varphi_{34})
\end{equation}
\begin{equation}\label{sft4n}
\sin\varphi_{12}=\frac{\sqrt{h^2+t_{34}^2\sin^{2}\varphi}}{T_{12}T_{34}}
\end{equation}
\begin{equation}\label{sft5n}
\cos\varphi_{12}=\frac{t_{12}-t_{34}\cos\varphi}{T_{12}T_{34}}
\end{equation}
\begin{equation}\label{sft6n}
\sin\varphi_{34}=\frac{\sqrt{H^2+t_{12}^2\sin^{2}\varphi}}{T_{12}T_{34}}
\end{equation}
\begin{equation}\label{sft7n}
\cos\varphi_{34}=\frac{t_{34}-t_{12}\cos\varphi}{T_{12}T_{34}}
\end{equation}

By applying the cosine law in $\triangle A_{4}^{\prime\prime}T_{12}T_{34},$
we get:

\begin{equation}\label{sft8n}
\cos\varphi_{34}=\frac{t_{34}^2+T_{12}T_{34}^2-(H^2)+t_{12}^2}{2 t_{34}T_{12}T_{34}}.
\end{equation}

From (\ref{sft7n}) and (\ref{sft8n}), we obtain:
\begin{equation}\label{sft9n}
T_{12}T_{34}=\sqrt{H^2+t_{12}^2+t_{34}^2-2t_{12}t_{34}\cos\varphi}
\end{equation}
\end{proof}

By replacing (\ref{sft2n}), (\ref{sft3n}),(\ref{sft4n}),(\ref{sft5n}),(\ref{sft6n}),(\ref{sft7n}),(\ref{sft8n}), (\ref{sft9n})
in (\ref{sft1n}), we derive (\ref{calcft3}).

\begin{example}

Le $A_{1}A_{2}A_{3}A_{4}$ be a tetrahedron in $\mathbb{R}^{3},$ such that:

$A_{1}=(0,0,0),$ $A_{2}=(2,0,0),$ $A_{3}=(-2,0,3),$ $A_{4}=(-1,-1,2),$ $h\approx 2.12,$ $\varphi \approx 125.26^{\circ}.$ By replacing (\ref{firsteq1ft}), (\ref{secondeq2ft}) and (\ref{calcft3}), we derive that:

$t_{12}\approx 1.17,$ $t_{34} \approx 1.42$ which yields $\alpha \approx 43.47^{\circ}.$

\end{example}

\begin{remark}
If the common perpendicular of $A_{1}A_{2}$ and $A_{4}A_{3}$ intersect the two linear segments at $A_{1}^{\prime\prime}\in [A_{1},A_{2}]$ and $A_{4}^{\prime\prime}\in [A_{4},A_{3}]$
or $A_{1}^{\prime\prime}\notin [A_{1},A_{2}]$ and $A_{4}^{\prime\prime}\in [A_{4},A_{3}]$
the following equations allow us to compute the position of the unweighted Fermat-Torricelli point $F$ for $B_{1}=B_{2}=B_{3}=B_{4}:$

\[\frac{\|t_{34}-t_{12}\cos\phi\|}{\sqrt{H^2+t_{12}\sin^{2}\phi}}=\frac{\|m_{34}-t_{34}\|}{\frac{a_{34}}{2}\tan\frac{\angle A_{4}FA_{3}}{2}},\]

\[\frac{\|t_{12}-t_{34}\cos\phi\|}{\sqrt{H^2+t_{34}\sin^{2}\phi}}=\frac{\|m_{12}-t_{12}\|}{\frac{a_{12}}{2}\tan\frac{\angle A_{4}FA_{3}}{2}}\]

\[\cot\frac{\angle A_{4}FA_{3}}{2}=\frac{2(H^2+k_{1}(t_{12}-t_{34}\cos\varphi))+k_{2}(t_{34}-t_{12}\cos\varphi)}{(\|t_{12}-k_{1}\|)\sqrt{H^2+t_{34}^2\sin^{2}\varphi}+(\|t_{34}-k_{2}\|)\sqrt{H^2+t_{12}^2\sin^{2}\varphi}}\]

\end{remark}

\section{The twist angle formed by the planes $A_{1}A_{2}T_{12}T_{34}$ and $A_{4}A_{3}T_{34}T_{12}$}

We denote by $\omega$ the dihedral angle (twist angle) formed by the planes $A_{1}A_{2}T_{12}T_{34}$ and $A_{4}A_{3}T_{34}T_{12},$
by $\vec{u_{12}}$ the unit vector from $A_{1}$ to $A_{2},$ by $\vec{u_{34}}$ the unit vector from $A_{3}$ to $A_{4}$
and by $\vec{u_{12S}}$ the unit vector from $T_{12}$ to $T_{34}.$

We recall that $\varphi_{12}=\angle A_{1}T_{12}T_{34}$ and $\varphi_{34}=\angle A_{4}T_{34}T_{12}.$

\begin{theorem}\label{calctwistangle1}
The twist angle $\omega$ is given by
\begin{equation}\label{calctwistangle2}
\cos\omega=\frac{\cos\varphi-\cos\varphi_{12}\cos\varphi_{34}}{\sin\varphi_{12}\varphi_{34}}.
\end{equation}
\end{theorem}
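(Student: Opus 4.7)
The natural approach is to realize $\cos\omega$ directly from the three unit vectors $\vec{u}_{12}$, $\vec{u}_{34}$, $\vec{u}_{12S}$ viewed as a trihedron at a single origin. The twist angle $\omega$ is then the dihedral angle of this trihedron at the edge $\vec{u}_{12S}$, since both planes $A_1A_2T_{12}T_{34}$ and $A_4A_3T_{34}T_{12}$ contain the Simpson line $T_{12}T_{34}$, and their directions transverse to that line are $\vec{u}_{12}$ and $\vec{u}_{34}$ respectively.

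To extract $\omega$ I would project $\vec{u}_{12}$ and $\vec{u}_{34}$ onto the plane orthogonal to the Simpson line:
\[
\vec{v}_{12}=\vec{u}_{12}-(\vec{u}_{12}\cdot\vec{u}_{12S})\,\vec{u}_{12S},\qquad \vec{v}_{34}=\vec{u}_{34}-(\vec{u}_{34}\cdot\vec{u}_{12S})\,\vec{u}_{12S}.
\]
Both vectors are orthogonal to $\vec{u}_{12S}$ and still lie in their respective planes, so the angle they form is exactly $\omega$, hence $\cos\omega=\vec{v}_{12}\cdot\vec{v}_{34}\,/\,(\|\vec{v}_{12}\|\,\|\vec{v}_{34}\|)$. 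A direct expansion gives
\[
\vec{v}_{12}\cdot\vec{v}_{34}=\vec{u}_{12}\cdot\vec{u}_{34}-(\vec{u}_{12}\cdot\vec{u}_{12S})(\vec{u}_{34}\cdot\vec{u}_{12S}),
\]
and $\|\vec{v}_{12}\|^{2}=1-(\vec{u}_{12}\cdot\vec{u}_{12S})^{2}$, $\|\vec{v}_{34}\|^{2}=1-(\vec{u}_{34}\cdot\vec{u}_{12S})^{2}$.

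Identifying the three pairwise dot products with the cosines of the three face angles of the trihedron, $\vec{u}_{12}\cdot\vec{u}_{34}=\cos\varphi$, $\vec{u}_{12}\cdot\vec{u}_{12S}=\cos\varphi_{12}$, $\vec{u}_{34}\cdot\vec{u}_{12S}=\cos\varphi_{34}$, the magnitudes become $\sin\varphi_{12}$ and $\sin\varphi_{34}$ and the claimed formula drops out. This is precisely the first spherical law of cosines applied to the spherical triangle whose vertices are the tips of $\vec{u}_{12},\vec{u}_{34},\vec{u}_{12S}$ on the unit sphere, with sides of lengths $\varphi,\varphi_{12},\varphi_{34}$ and the angle $\omega$ opposite to $\varphi$.

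The only delicate point, and the main obstacle, is the sign bookkeeping: $\varphi$ is defined through the directed edge vectors $\vec{a}_{12}$ and $\vec{a}_{43}$, while $\vec{u}_{34}$ points from $A_{3}$ to $A_{4}$, and the angles $\varphi_{12}=\angle A_{1}T_{12}T_{34}$, $\varphi_{34}=\angle A_{4}T_{34}T_{12}$ depend on where $T_{12},T_{34}$ fall on their respective edges. One must therefore verify that, with a coherent choice of orientation for $\vec{u}_{12}$, $\vec{u}_{34}$, and $\vec{u}_{12S}$, the three dot products really equal $\cos\varphi,\cos\varphi_{12},\cos\varphi_{34}$ and not their negatives. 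Any single reversal of orientation would change a sign, but a coordinated choice (replacing $\vec{u}_{34}$ by $-\vec{u}_{34}$ if necessary, which simultaneously reverses $\cos\varphi$ and $\cos\varphi_{34}$) leaves the quotient invariant, so the formula holds in the form stated.
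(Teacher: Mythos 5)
Your proposal is correct and follows essentially the same route as the paper: both compute the dihedral angle along the Simpson line purely from the three pairwise dot products of $\vec{u}_{12}$, $\vec{u}_{34}$, $\vec{u}_{12S}$, i.e.\ the spherical law of cosines. The paper packages this via the identity $(\vec{a}\times\vec{b})\cdot(\vec{c}\times\vec{d})=(\vec{a}\cdot\vec{c})(\vec{b}\cdot\vec{d})-(\vec{a}\cdot\vec{d})(\vec{b}\cdot\vec{c})$ applied to the plane normals, while you project onto the orthogonal complement of $\vec{u}_{12S}$ -- algebraically identical -- and your explicit attention to the orientation conventions (the paper absorbs them as $\pi-\varphi_{12}$, $\pi-\varphi_{34}$) is a point the paper treats only implicitly.
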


\begin{proof}
We take into account the following identity w.r to the vectors $\vec{a},$ $\vec{b},$ $\vec{c}$ and $\vec{d}:$
\begin{equation}\label{calctwistangle3}
(\vec{a}\times \vec{b})\cdot (\vec{c}\times \vec{d})=(\vec{a}\cdot \vec{c})(\vec{b}\cdot \vec{d})-(\vec{a}\cdot \vec{d})(\vec{b}\cdot \vec{c}).
\end{equation}

By replacing $\vec{a}=\vec{u_{12}},$ $\vec{b}=\vec{d}=\vec{u_{12S}}$ and $\vec{c}=\vec{u_{34}}$ in (\ref{calctwistangle3}), we obtain:
\begin{equation}\label{calctwistangle4}
\|\vec{u_{12}}\times\vec{u_{12S}}\| \|\vec{u_{34}}\times\vec{u_{12S}}  \| \cos\omega=(\vec{u_{12}}\cdot\vec{u_{34}})\|\vec{u_{12S}}\|^{2}-(\vec{u_{12}}\cdot\vec{u_{12S}})(\vec{u_{34}}\vec{u_{12S}})
\end{equation}
or

\begin{equation}\label{calctwistangle5}
\sin(\pi-\varphi_{12})\sin(\pi-\varphi_{34})\cos\omega=\cos\varphi-\cos(\pi-\varphi_{12})\cos(\pi-\varphi_{34})
\end{equation}
which yields (\ref{calctwistangle2}). By replacing (\ref{sft5n}), (\ref{sft7n})
and (\ref{sft9n})

\[\cos\varphi_{12}=\frac{t_{12}-t_{34}\cos\varphi}{T_{12}T_{34}}\]

\[\cos\varphi_{34}=\frac{t_{34}-t_{12}\cos\varphi}{T_{12}T_{34}},\]

\[T_{12}T_{34}=\sqrt{H^2+t_{12}^2+t_{34}^2-2t_{12}t_{34}\cos\varphi}\]

in (\ref{calctwistangle2}), the twist angle $\omega$ depends on $t_{12}$ and $t_{34}.$

\end{proof}

\begin{corollary}
If $\varphi=0^{\circ},$ then
\begin{equation}\label{calctwistangle6n}
\cos\omega=\frac{1-\cos\varphi_{12}\cos\varphi_{34}}{\sin\varphi_{12}\varphi_{34}}.
\end{equation}
\end{corollary}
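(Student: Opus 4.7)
The plan is to derive this corollary as an immediate specialization of Theorem~\ref{calctwistangle1}. The hypothesis $\varphi = 0^\circ$ means by definition of $\varphi \equiv \arccos(\vec{a}_{12}\cdot\vec{a}_{43}/(a_{12}a_{43}))$ that $\vec{a}_{12}$ and $\vec{a}_{43}$ are codirected, i.e.\ the two opposite edges $A_{1}A_{2}$ and $A_{4}A_{3}$ are parallel (the limiting configuration just outside coplanarity). Under this hypothesis $\cos\varphi = 1$, and the result will follow from a single substitution into formula~(\ref{calctwistangle2}).

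Before performing the substitution, I would briefly check that the geometric setup of Theorem~\ref{calctwistangle1} still makes sense in this limit. The common perpendicular of two parallel non-coincident lines has a well-defined length $H>0$, so by Theorem~\ref{conditionswst} the two weighted Fermat--Torricelli points $O_{12}, O_{34}$ still exist (under the stated inequalities), and the weighted Simpson line remains transverse to both edges. Consequently the angles $\varphi_{12} = \angle A_{1}T_{12}T_{34}$ and $\varphi_{34} = \angle A_{4}T_{34}T_{12}$ lie strictly in $(0,\pi)$, so the denominator $\sin\varphi_{12}\sin\varphi_{34}$ of the right-hand side of (\ref{calctwistangle2}) is nonzero and the formula specializes without ambiguity.

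The remainder of the argument is a one-line computation: substituting $\cos\varphi = 1$ into
\[\cos\omega = \frac{\cos\varphi - \cos\varphi_{12}\cos\varphi_{34}}{\sin\varphi_{12}\sin\varphi_{34}}\]
from Theorem~\ref{calctwistangle1} yields exactly (\ref{calctwistangle6n}). There is no genuine obstacle here; the corollary is included to record the simpler closed form produced by the parallel-edge specialization. If one wished, one could additionally express $\varphi_{12}$ and $\varphi_{34}$ in terms of $t_{12}$, $t_{34}$ and $H$ by invoking (\ref{sft5n}), (\ref{sft7n}) and (\ref{sft9n}) with $\cos\varphi = 1$, which collapses $T_{12}T_{34}$ to $\sqrt{H^{2} + (t_{12}-t_{34})^{2}}$, giving an explicit scalar formula for $\cos\omega$ in the parallel case.
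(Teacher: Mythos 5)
Your proposal is correct and takes essentially the same route as the paper: the paper's proof is literally the one-line substitution of $\varphi=0^{\circ}$ into (\ref{calctwistangle2}). Your additional remarks on the parallel-edge geometry and the nonvanishing of the denominator are harmless elaborations but not needed for the paper's argument.
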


\begin{proof}
By replacing $\varphi=0^{\circ}$ in (\ref{calctwistangle2}), we derive (\ref{calctwistangle6n}).
\end{proof}

\begin{corollary}
If $\varphi=90^{\circ},$ then

\begin{equation}\label{calctwistangle7n}
\cos\omega=-\cot\varphi_{12}\cot\varphi_{34}.
\end{equation}
\end{corollary}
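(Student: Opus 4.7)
The plan is to derive this corollary as an immediate specialization of Theorem~\ref{calctwistangle1}, exactly parallel to the preceding $\varphi=0^{\circ}$ corollary. The statement is simply the $\varphi=\pi/2$ case of the master formula
\[
\cos\omega=\frac{\cos\varphi-\cos\varphi_{12}\cos\varphi_{34}}{\sin\varphi_{12}\sin\varphi_{34}},
\]
so no new geometric input is required; the work is purely trigonometric substitution.

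First I would recall the formula proved in Theorem~\ref{calctwistangle1}, noting that it is valid for any value of the angle $\varphi$ between the oriented edge vectors $\vec{u}_{12}$ and $\vec{u}_{34}$, since the derivation used only the vector identity (\ref{calctwistangle3}) together with $\vec{u}_{12}\cdot\vec{u}_{34}=\cos\varphi$ and $\vec{u}_{12}\cdot\vec{u}_{12S}=\cos(\pi-\varphi_{12})$, $\vec{u}_{34}\cdot\vec{u}_{12S}=\cos(\pi-\varphi_{34})$. Thus the formula applies verbatim when $\varphi=\pi/2$.

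Next I would set $\varphi=\pi/2$, so that $\cos\varphi=0$, in the master expression. The numerator collapses to $-\cos\varphi_{12}\cos\varphi_{34}$, while the denominator is unchanged at $\sin\varphi_{12}\sin\varphi_{34}$. Dividing yields
\[
\cos\omega=\frac{-\cos\varphi_{12}\cos\varphi_{34}}{\sin\varphi_{12}\sin\varphi_{34}}=-\cot\varphi_{12}\,\cot\varphi_{34},
\]
which is exactly (\ref{calctwistangle7n}).

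There is essentially no obstacle here; the only minor subtlety to flag would be that $\sin\varphi_{12},\sin\varphi_{34}\neq0$, i.e.\ that neither $T_{12}T_{34}$ is collinear with $A_{1}A_{2}$ nor with $A_{4}A_{3}$. This is guaranteed by the non-degeneracy assumption that $O_{12}$ and $O_{34}$ are genuine interior weighted Fermat--Torricelli points given by Theorem~\ref{conditionswst}, so the cotangents are well defined and the substitution goes through unconditionally.
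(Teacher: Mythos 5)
Your proposal is correct and follows exactly the paper's own route: substitute $\varphi=90^{\circ}$ into the formula of Theorem~\ref{calctwistangle1} so that $\cos\varphi=0$ and the quotient reduces to $-\cot\varphi_{12}\cot\varphi_{34}$. Your added remark that $\sin\varphi_{12},\sin\varphi_{34}\neq 0$ is a sensible (if unstated in the paper) non-degeneracy check, but otherwise there is nothing to add.
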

\begin{proof}
By replacing $\varphi=90^{\circ}$ in (\ref{calctwistangle2}), we derive (\ref{calctwistangle7n}).
\end{proof}

\begin{corollary}
If $\varphi_{12}=\varphi_{34}=90^{\circ},$ then $\omega=\varphi.$
\end{corollary}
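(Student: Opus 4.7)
The plan is to proceed by direct substitution into the formula established in Theorem~\ref{calctwistangle1}. That theorem gives
\[
\cos\omega = \frac{\cos\varphi - \cos\varphi_{12}\cos\varphi_{34}}{\sin\varphi_{12}\sin\varphi_{34}}.
\]
Under the hypothesis $\varphi_{12}=\varphi_{34}=90^{\circ}$, I would use $\cos 90^{\circ}=0$ and $\sin 90^{\circ}=1$, so that the numerator collapses to $\cos\varphi$ and the denominator is $1$. This leaves $\cos\omega=\cos\varphi$, and since both $\omega$ and $\varphi$ are dihedral/angle quantities taken in $[0,\pi]$, inverting the cosine yields $\omega=\varphi$.

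The hard part, if any, is simply confirming that the substitution is legitimate and that there is no degeneracy to worry about: the denominator $\sin\varphi_{12}\sin\varphi_{34}=1$ is safely nonzero, so the formula from Theorem~\ref{calctwistangle1} applies verbatim. As a sanity check, I would add a short geometric remark: the hypothesis $\varphi_{12}=\varphi_{34}=90^{\circ}$ means the weighted Simpson line $T_{12}T_{34}$ meets each of the non-neighbouring edges $A_{1}A_{2}$ and $A_{4}A_{3}$ orthogonally, so it coincides with their common perpendicular. The two Steiner planes $A_{1}A_{2}T_{12}T_{34}$ and $A_{4}A_{3}T_{34}T_{12}$ then intersect along this common perpendicular, and by the standard dihedral-angle description through two vectors each orthogonal to the ridge, $\omega$ equals the angle between $\vec{a}_{12}$ and $\vec{a}_{43}$, which is $\varphi$ by definition. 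This geometric picture corroborates the one-line algebraic deduction.
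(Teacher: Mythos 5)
Your proof is correct and follows exactly the paper's route: direct substitution of $\varphi_{12}=\varphi_{34}=90^{\circ}$ into the twist-angle formula of Theorem~\ref{calctwistangle1}, yielding $\cos\omega=\cos\varphi$ and hence $\omega=\varphi$. The added remarks on non-degeneracy and the geometric interpretation via the common perpendicular are sound supplements but do not change the argument.
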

\begin{proof}
By replacing $\varphi_{12}=\varphi_{34}=90^{\circ}$ in (\ref{calctwistangle2}), we derive $\omega=\varphi.$
\end{proof}

\end{document}